\newcommand{\p}{\partial}
\newcommand{\R}{\mathbb{R}}
\newtheorem{theorem}{Theorem}[section]
\newtheorem{remark}{Remark}[section]
\newcommand*\diff{\mathop{}\!\mathrm{d}}
\newtheorem*{remark*}{Remark}
\tikzset{every picture/.style={line width=0.75pt}}
\newtcolorbox{mybox}[1]{%
    tikznode boxed title,
    enhanced,
    arc=0mm,
    interior style={white},
    attach boxed title to top left= {yshift=-\tcboxedtitleheight/2-0.05cm, xshift=0.7cm},
    fonttitle=\small\bfseries,
    colbacktitle=white,coltitle=black,
    boxed title style={size=small,colframe=white,boxrule=0pt},
    title={#1}
    }
\title{ A Bound Preserving Energy Stable Scheme\\ for a Nonlocal Cahn--Hilliard Equation}
\author{Rainey Lyons\footnote{rainey.lyons@kau.se}, Grigor Nika, Adrian Muntean\\
Department of Mathematics and Computer Science,\\ Karlstad University, Sweden}
\date{\today}
\begin{document}

\maketitle

\begin{abstract}
\noindent
    We present a finite-volume based numerical scheme for a nonlocal Cahn--Hilliard equation which combines ideas from recent numerical schemes for gradient flow equations and nonlocal Cahn--Hilliard equations.
    The equation of interest is a special case of a previously derived and studied system of equations which describes phase separation in ternary mixtures.
    We prove the scheme is both energy stable and respects the analytical bounds of the solution.
    Furthermore, we present numerical demonstrations of the theoretical results using both the Flory--Huggins (FH) and Ginzburg--Landau (GL) free-energy potentials.
\end{abstract}

\textbf{MSC 2020:} 65M08; 35R09; 35Q70.

\textbf{Keywords: } Nonlocal Cahn--Hilliard equation; gradient flow; finite-volume method; bound preserving energy stable schemes.

\section{Introduction}

Pattern and morphology formation are an important aspect of many areas of materials science especially, for example, in the construction of organic solar cells \cite{hoppe2004organic} and thin rubber bands \cite{creton2016rubber}. 
For these particular applications, the type of morphology produced greatly influences crucial properties of the studied materials, e.g., the efficiency of the solar cells or the mechanical behavior of the rubber bands.
These processes are often governed at the continuum level by a complex set of equations which usually involve an Allen--Cahn-type or Cahn--Hilliard-type equation tracking the time and space evolution of a phase indicator; see, for instance, \cite{miranville2019cahn} as well as  \cite{forest2023cahn,lyons2024phase,lyons2023continuum}. 
In this work we are interested in developing suitable numerical schemes approximating solutions to model equations that have the potential to govern morphology formation. 
In the applications above, the mixture considered is usually a blend of multiple solutes and some type (possibly multiple types) of solvent.
As a particular example, we consider the following system:

\begin{equation}\label{Eq:Marra}
    \p_t \begin{pmatrix}
        m\\
        \phi
    \end{pmatrix} = \nabla \cdot \left( \bar{M}(m,\phi)\nabla \frac{\delta \mathcal{F}}{\delta(m,\phi)}\right),
\end{equation}

where $\Bar{M}$ represents a degenerate mobility given by,

\begin{equation}\label{Eq:Marra_Mobility}
    \Bar{M}(m,\phi) := \beta (1-\phi) \begin{bmatrix}
    \phi + \frac{\phi^2 - m^2}{1-\phi} & m \\
    m & \phi
\end{bmatrix},
\end{equation}

and $\mathcal{F}$ is the free energy functional given by,

\[\mathcal{F}(m,\phi) = \int_\Omega g(m,\phi) \diff{x} + \frac{1}{2} \int_\Omega \int_\Omega J(x-x') [m(x) - m(x')]^2 \diff{x'} \diff{x}, \]

with 

\begin{align*}
g(m,\phi) 
&{:=} \phi - m^2 + \beta^{-1} \Big[ \frac{1}{2}(\phi+m)\log(\phi+m) + \frac{1}{2}(\phi-m)\log(\phi-m)\\
&+ (1-\phi) \log(1-\phi) - \phi \log(2)\Big].
\end{align*}

where the unknown $m: \Omega \longrightarrow [-1,1]$ represents the magnetization (or spin), while $\phi : \Omega \longrightarrow [0,1]$ represents the volume concentration density of solute.

Equation \eqref{Eq:Marra} was derived in \cite{Marra} as the rigorous hydrodynamic limit of the Kawasaki dynamics with inverse temperature $\beta >0$ for the Blume--Capel model with magnetic field $h_1$ and chemical potential $h_2$, whose Hamiltonian in a finite square $V\subset\mathbb{Z}^d$ is 

\begin{equation}\label{Eq_HamMarra}
H_\gamma (\sigma) 
= 
\frac{1}{2} \sum_{\substack{x \neq x' \in V}} 
J_\gamma (x-x')[\sigma(x)-\sigma(x')]^2 
-\sum_{x\in V}h_1 \sigma(x) 
-\sum_{x\in V}h_2 \sigma^2(x), 
\end{equation}
where $\sigma: \mathbb{Z}^d  \to \{-1,0,+1\}$ is the spin variable and $J_\gamma: \mathbb{R}^d\to \mathbb{R}$ is a Kac potential function with range of interaction $\gamma^{-1}$ such that 
\begin{equation}
\label{geigamma}
J_\gamma(r)=\gamma^d J(\gamma r)
\end{equation}
for all $r\in \mathbb{R}^d$ and a symmetric, compactly supported kernel $J\in C_+^2(\mathbb{R}^d)$ with the property $\int_{\mathbb{R}^d}J(r)dr=1$.

We refer the readers to \cite{Marra} as well as the recent works \cite{lyons2024phase,lyons2023continuum} for more information on the precise physical meaning of the variables at play and their relationship to those found in the Hamiltonian \eqref{Eq_HamMarra}.
This system has the potential to model morphology formation in the construction of organic solar cells and thin adhesive bands \cite{lyons2023continuum,lyons2024phase};
however, the numerical schemes used in previous works are finite-volume (FV) schemes based solely on the PDE formulation of the equation:
\begin{equation}\label{Eq:MarraPDE}
\left\lbrace
\begin{split}
    \p_t m &= \nabla \cdot \left[\nabla m - 2 \beta (\phi -m^2 ) (\nabla J * m) \right] \mbox{ in } (0,T)\times\Omega\\
    \p_t \phi &= \nabla \cdot \left[ \nabla \phi - 2 \beta m (1 - \phi) (\nabla J * m) \right]\mbox{ in } (0,T)\times\Omega
\end{split}\right.. 
\end{equation} 
While the use and study of FV schemes has many benefits such as conservation of mass, for this particular problem such schemes come with a few technical difficulties. For example, due to the sharp changes at the interface between phases, FV schemes require special techniques such as flux-limiter methods in order to preserve physically meaningful bounds (see, e.g., Figure \ref{fig:FV_Max} for an illustration on the importance of flux-limiter methods). 
Additionally, as these numerical methods were based on the strong formulation \eqref{Eq:MarraPDE}, the energy functional is only implicitly present in the scheme and it is unclear if the free energy is dissipated by the numerical scheme.
To this end, we aim to develop a numerical scheme which both preserves the analytical bounds of the solutions while also explicitly making use of the energy structure of the equation.

\begin{figure}[h]
    \centering
    \includegraphics[scale = 0.4]{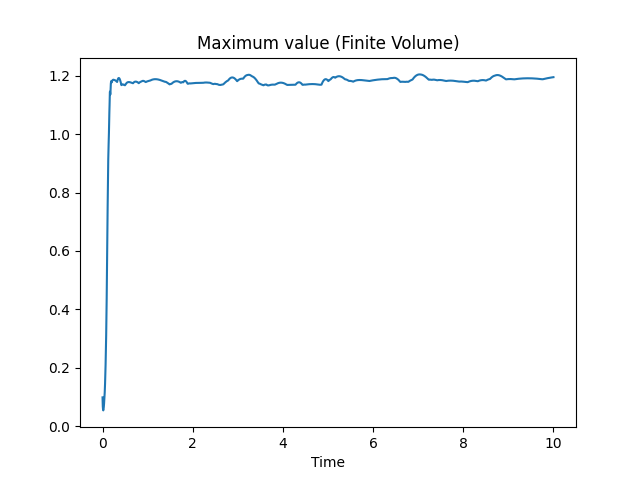}
    \caption{Maximum value of $|m|$ over time when solving \eqref{Eq:MarraPDE} with a finite-volume method without using flux limiters. Notice here that the analytical bound $|m| \leq 1$ is not maintained.}
    \label{fig:FV_Max}
\end{figure}

In this manuscript, we take a first step in this direction by considering a special case of model \eqref{Eq:Marra}, namely, the two phase scenario given when $\phi \equiv 1$. 
In this case, system \eqref{Eq:Marra} reduces down to a single equation of the form,

\begin{equation}\label{Eq:NLCH_GradFlow}
    \p_t \rho =  \nabla \cdot \Big[ M(\rho) \nabla \left( \frac{\delta \mathcal{E}(\rho)}{\delta \rho}\right)\Big].
\end{equation}
Here, to differentiate between settings, we use the variable $\rho$ instead of $m$ to represent the phase density. 
Heuristically, one could relate the mobilities of \eqref{Eq:NLCH_GradFlow} and \eqref{Eq:Marra} by $M(\rho) = \Bar{M}(\rho,1)$ and the free energies by $\mathcal{E}(\rho) = \mathcal{F}(\rho,1)$. 

While equation \eqref{Eq:Marra} is relatively novel, equation \eqref{Eq:NLCH_GradFlow} has been studied before in the literature. 
In \cite{Giacomin} this equation is rigorously derived using similar techniques and set up which lead to \eqref{Eq:Marra} and in \cite{giacomin1998phase} the sharp interface limit is explored.
There are also a plethora of numerical schemes for such equations as well, and the numerical scheme discussed in this paper, while novel to our knowledge, is a combination of the various aspects of these numerical methods.
We start first with the numerical methods studied in \cite{guan_diss,guan2014convergent,guan2014second}. 
Here, the authors approximate solutions to \eqref{Eq:NLCH_GradFlow} using a system of two discrete nonlinear equations, one describing the evolution of the phases and the other the evolution of the chemical potential, $\tfrac{\delta \mathcal{E}}{\delta \rho}$. 
By employing a convex splitting time discretization originally suggested in \cite{eyre1998unconditionally}, the authors of the aforementioned works are able to show strict dissipation of the discrete free energy or of a modified version of the free energy. 
While these schemes have seen great success, the addition of equations can be computationally taxing when considering systems of Cahn--Hilliard equations. 
Moreover, the performance of such schemes appear to be heavily dependent on the choice of nonlinear solver used.
On the other hand, it is possible to view equation \eqref{Eq:NLCH_GradFlow} from the perspective of one familiar with the field of conservation laws.
Such a philosophy is explored in many works including \cite{bailo2020fully,bailo2023,carrillo2015finite,huang2022sturcture}. 
These works are where we draw much of our inspiration and where we are closely related. 
In \cite{bailo2020fully,carrillo2015finite}, finite-volume based numerical schemes are developed for equations of similar structure to \eqref{Eq:NLCH_GradFlow} with nonlocal interactions in the free energy term, but with mobility $M(\rho) = \rho$ (or the constant mobility case) commonly seen in aggregation equations \cite{carrillo2014derivation}.
Meanwhile, the recent work \cite{bailo2023} employs similar ideas to the degenerate mobility case, but with only local terms in the free energy functional. 
To our knowledge, no one has yet applied these numerical techniques to equation \eqref{Eq:NLCH_GradFlow} with both nonlocal free energy and degenerate mobility. 

The paper is organized as follows: in Section \ref{Sec:Assumptions}, we discuss the assumptions used in throughout the manuscript; in Section \ref{Sec:Numerical_Scheme}, we introduce the numerical scheme and prove the main qualitative properties of the scheme; in Section \ref{Sec:Simulation}, we implement the numerical scheme and demonstrate the analytical results; finally in Section \ref{Sec:Conclusion}, we summarize the results and anticipate the difficulties in applying the numerical method to \eqref{Eq:Marra}.



\section{Model and assumptions}\label{Sec:Assumptions}

We are interested in developing suitable numerical schemes approximating solutions to model equations of the following form:

\begin{equation}
    \p_t \rho =  \nabla \cdot \Big[ M(\rho) \nabla \left( \frac{\delta \mathcal{E}(\rho)}{\delta \rho}\right)\Big],
\end{equation}

where $\rho: \Omega \longrightarrow [-1,1] $ represents the density of two distinct phases, $M(\rho) := \beta (1+\rho)(1-\rho)$ represents the degenerate mobility of the phases, and $\mathcal{E}$ is the free energy of the system with the form
\begin{equation}\label{Eq:Free_Energy}
    \mathcal{E}(\rho) := \int_\Omega  f(\rho(x)) \diff{x} + \frac{1}{2} \int_\Omega \int_\Omega J(x-x')[\rho(x) -\rho(x')]^2 \diff{x}\diff{x'}.  
\end{equation}

with  the Flory--Huggins free energy:

\begin{equation}\label{Eq:RefSystem_FH}
    f(\rho) = \beta^{-1} \left[(1-\rho)\log\left(\frac{1-\rho}{2}\right) + (1+\rho)\log\left(\frac{1+\rho}{2}\right) \right] + (1 - \rho^2).
\end{equation}

For generality, we assume that the energy landscape of the reference system $f$ is continuous and that there exists convex functions $f_c$ and $f_e$ such that $f = f_c - f_e$. 
This convex splitting will be utilized the time discretization in a similar way to \cite{bailo2023, eyre1998unconditionally, guan2014convergent}.  
This assumption fits to many well studied reference systems including the Flory--Huggins logarithmic potential \eqref{Eq:RefSystem_FH} and the Ginzburg--Landau double-well potential:

\begin{equation}\label{Eq:RefSystem_GL}
    f(\rho) = \frac{1}{4}(\rho^2 - 1)^2 = \frac{\rho^4 +1}{4} - \frac{\rho^2}{2}.
\end{equation} 

From the motivating set of equations \eqref{Eq:Marra}, we have that $J$ satisfies the following assumptions:
\begin{enumerate}
    \item[(J1)] $ J \in C^2(\R^d)$ is a nonnegative function compactly supported on the unit ball;
    \item[(J2)] $\int_{\R^d} J(x) \diff{x} = 1;$
    \item[(J3)] $J(x) = J(|x|)$, i.e., $J$ is radially symmetric.  
\end{enumerate}
We remark, however, that the analysis to follow does not require such smoothness on $J$. 
Indeed, the results below hold even in the case of $J \in W^{1,1}(\R^d)$.
Additionally, using the positive negative splitting studied in \cite{guan2014second}, one could relax the nonnegativity assumption as well.
Nevertheless, under these assumptions, one can show that 
\begin{equation}
    \mathcal{E}(\rho) = \int_\Omega  f(\rho(x)) \diff{x} + \int_\Omega \rho^2(x) \diff{x} - \int_\Omega \int_\Omega J(x-x') \rho(x') \diff{x'} \rho(x) \diff{x},
\end{equation}
and so, the chemical potential $w$ is given by

\begin{equation}
    w := \frac{\delta \mathcal{E}}{\delta \rho} = f'(\rho) + 2\rho - 2J*\rho.
\end{equation}

\section{Numerical Scheme}\label{Sec:Numerical_Scheme}
Throughout this section, we assume that equation \eqref{Eq:NLCH_GradFlow} is posed on a 2 dimensional square, $\Omega \subset \R^2$, equipped with periodic boundary conditions.
This is done purely for notational convenience and the extension to higher dimensions and no flux boundary conditions is straight forward (see, e.g., \cite{bailo2023} for a similar scenario). 

\subsection{Discretization}
 For lucidity, we assume that the square $\Omega$ is discretized by a uniform spacial mesh size, $h>0$. 
 We remark that a uniform spacial mesh size is not necessary, and the adjustments to the scheme can be easily implemented.
 We denote the uniformly spaced nodes of the mesh by the pair $(x_i,y_j)$ and we denote the mesh cells by $\Lambda_{i,j} := [x_i -\frac12 h, x_i +\frac12 h) \times [y_j -\frac12 h, y_j +\frac12 h) $ for $i,j = 0,1,2,\dots, N-1$. 
 To account for the periodic boundary conditions, we periodically extend functions defined on the nodes, i.e., $f(x_i,y_j) = f(x_{i\pm N},y_j) = f(x_i, y_{j\pm N})$. 
 We approximate given initial data, $\rho_0$, in the standard way 
\[ \rho_{i,j}^0 := \frac{1}{|\Lambda_{i,j}|} \int_{\Lambda_{i,j}} \rho_0(x,y) \diff{x}\diff{y}.\]
We denote the discrete inner product by $\langle \cdot, \cdot \rangle_h$ and the discrete $\ell^2$ norm by $\| \cdot \|_{\ell^2}$. For example, 
\[\| \rho^0 \|_{\ell^2}^2 := \langle \rho^0,\rho^0 \rangle_h = h^2 \sum_{i,j = 0}^{N-1} (\rho_{i,j}^0)^2.\]

\subsection{Circular Convolution}
For simplicity, we approximate the circular convolution in both schemes with the operation $\star$ defined as
\begin{equation}\label{Eq:CircConv}
    [J\star \rho]^k_{i,j} := h^2 \sum_{n,m = 0}^{N-1} J_{n,m} \,  \rho^k_{i-n,j-m},
\end{equation}
where $\rho$ has been periodically extended to make sense of unusual indices. 
We point out that other approximations are valid such as the edge-valued version of \eqref{Eq:CircConv} used in \cite{guan2014second,guan2014convergent} as well as the fast Fourier transform methods similar to \cite{TiwariKumaretal_2021_FastAccurateApproximation}. 
The main properties of the approximation needed in our analysis are 
\begin{equation}\label{Eq:CircConv_Commute}
    \langle [J \star \phi] , \psi \rangle_h = \langle [J \star \psi] , \phi \rangle_h  
\end{equation}
and 
\begin{equation}\label{Eq:CircConvBound}
    \left| \langle [J \star \phi] , \psi \rangle_h \right| \leq [J\star 1] \left( \frac{\varepsilon}{2} \| \phi \|_{\ell^2}^{2} + \frac{1}{2 \varepsilon} \| \psi \|_{\ell^2}^2 \right),
\end{equation}
whenever $\phi$ and $ \psi$ are periodic and $J$ is nonnegative and radially symmetric.
Therefore, any approximation of the convolution with these properties can be used. A proof of these inequalities for \eqref{Eq:CircConv} is similar to the proof found in \cite[Section 3.3]{guan_diss} and is therefore omitted.

\subsection{Bound Preserving Scheme}
Inspired by the works \cite{bailo2023,huang2022sturcture}, we consider a conservation law type scheme on equation \eqref{Eq:NLCH_GradFlow}. 
We adapt similar notation to that of \cite{bailo2023} for convenience to the reader. 
Keeping the same space and time discretization as before, we prescribe the scheme
\begin{equation}\label{Eq:CL_Scheme}
    \frac{\rho^{k+1}_{i,j} - \rho^{k}_{i,j}}{\Delta t} + \frac{1}{h}\left[F_{i+1/2,j}^{k+1} - F_{i-1/2,j}^{k+1} + F_{i,j+1/2}^{k+1} - F_{i,j-1/2}^{k+1}\right] =0,
\end{equation}
where the numerical flux $F$ is given by
\[ F^{k+1}_{i+1/2,j} = M(\rho^{k+1}_{i,j} , \rho^{k+1}_{i+1,j}) [u_{i+1/2,j}^{k+1}]^+ + M(\rho^{k+1}_{i+1,j} , \rho^{k+1}_{i,j} ) [u_{i+1/2,j}^{k+1}]^-\]
and 
\[F^{k+1}_{i,j+1/2} = M(\rho^{k+1}_{i,j} , \rho^{k+1}_{i,j+1}) [u_{i,j+1/2}^{k+1}]^+ + M(\rho^{k+1}_{i,j+1} , \rho^{k+1}_{i,j} ) [u_{i,j+1/2}^{k+1}]^-,\]
where the discrete mobility is calculated with the function 
\begin{equation}\label{Eq:Mobility_Split}
    M(x,y) := \beta [1+x]^+ [1-y]^+,
\end{equation}
and the velocity $u^{k+1}_{l+1/2}:= \tfrac{-1}{h}(w^{k+1}_{l+1} - w_l^{k+1})$ with
\[w^{k+1}_{i,j} := f_c'(\rho^{k+1}_{i,j}) - f_e'(\rho^{k}_{i,j}) + 2\rho^{k+1}_{i,j} - 2[J\star \rho]^k_{i,j}.\]  

One of the novelties of the work \cite{bailo2023} is the handling of the degenerate mobility \eqref{Eq:Mobility_Split}. 
As we will show later, this splitting is the main reason the analytical bound $|\rho| \leq 1$ is preserved by the numerical scheme. 

\subsection{Main results}
In this section, we provide proofs of the conservation of mass, boundedness, and energy stability properties of the scheme \eqref{Eq:CL_Scheme}. The proof of the following theorem is essentially the same as that found in Section 2.1 of \cite{bailo2023} with slight adjustments to account for periodic boundary conditions. 

\begin{theorem}
    For any $\Delta t, h > 0$, scheme \eqref{Eq:CL_Scheme} has the following properties:
    \begin{enumerate}
        \item \textbf{Conservation of mass:} $\sum_{i,j} \rho_{i,j}^{k+1} = \sum_{i,j} \rho_{i,j}^{k}$ ;
        \item \textbf{Boundedness of the phase-field:} $|\rho_{i,j}^k| \leq 1$ for all $i,j$ $\Rightarrow$ $|\rho_{i,j}^{k+1}| \leq 1$ for all $i,j$.        
    \end{enumerate}
\end{theorem}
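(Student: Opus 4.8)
The plan is to establish the two assertions separately, each by direct manipulation of the update formula \eqref{Eq:CL_Scheme}; no properties of $J$ or $f$ beyond their appearance in the scheme are needed.

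For conservation of mass, I would sum \eqref{Eq:CL_Scheme} over all cells $i,j=0,\dots,N-1$. The discrete flux divergence is in telescoping form in each coordinate direction, and since the fluxes $F^{k+1}_{i\pm1/2,j}$ and $F^{k+1}_{i,j\pm1/2}$ are periodically extended, each one–dimensional sum collapses to a difference of values at matching periodic nodes and therefore vanishes. Multiplying through by $\Delta t$ leaves $\sum_{i,j}\rho^{k+1}_{i,j}=\sum_{i,j}\rho^{k}_{i,j}$.

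For the boundedness I would run a discrete maximum–principle argument that exploits the one–sided mobility splitting \eqref{Eq:Mobility_Split} together with the upwind structure of the flux. Let $(i_0,j_0)$ be a cell at which $\rho^{k+1}$ attains its maximum over the finitely many periodic cells, and argue by contradiction assuming $\rho^{k+1}_{i_0,j_0}>1$. Then $[1-\rho^{k+1}_{i_0,j_0}]^+=0$, so every mobility factor of the form $M(\,\cdot\,,\rho^{k+1}_{i_0,j_0})$ vanishes, while $[1+\rho^{k+1}_{i_0,j_0}]^+>0$. Inspecting the four fluxes adjacent to $(i_0,j_0)$, one finds that the outgoing fluxes $F^{k+1}_{i_0+1/2,j_0}$ and $F^{k+1}_{i_0,j_0+1/2}$ reduce to a nonnegative mobility times $[u^{k+1}]^+\ge0$, hence are $\ge0$, whereas the incoming fluxes $F^{k+1}_{i_0-1/2,j_0}$ and $F^{k+1}_{i_0,j_0-1/2}$ reduce to a nonnegative mobility times $[u^{k+1}]^-\le0$, hence are $\le0$. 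Consequently the discrete flux divergence at $(i_0,j_0)$ is nonnegative, and \eqref{Eq:CL_Scheme} yields $\rho^{k+1}_{i_0,j_0}\le\rho^{k}_{i_0,j_0}\le1$, contradicting $\rho^{k+1}_{i_0,j_0}>1$. The lower bound $\rho^{k+1}_{i,j}\ge-1$ then follows from the symmetric argument applied at a cell where $\rho^{k+1}$ is minimal, using that $[1+\rho^{k+1}_{i_0,j_0}]^+$ vanishes when $\rho^{k+1}_{i_0,j_0}<-1$.

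The routine portion is the telescoping in the first part and the bookkeeping of which $[\,\cdot\,]^+$ or $[\,\cdot\,]^-$ factor survives in each of the four adjacent fluxes in the second part. The one genuine subtlety — and the step I would phrase most carefully — is that the statement concerns an arbitrary solution of the implicit system \eqref{Eq:CL_Scheme} (its solvability is not claimed here), so the argument must be stated purely as an implication: \emph{if} $\rho^{k+1}$ solves \eqref{Eq:CL_Scheme} and $|\rho^{k}|\le1$, then $|\rho^{k+1}|\le1$, with the extremal cell taken among the finitely many periodic cells so that attainment of the max and min is automatic.
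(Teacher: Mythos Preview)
Your proposal is correct and matches the paper's proof essentially line for line: telescoping plus periodicity for mass conservation, and a contradiction argument for boundedness based on the vanishing of $M(\,\cdot\,,\rho^{k+1}_{i_0,j_0})$ when $\rho^{k+1}_{i_0,j_0}>1$, which forces the surviving flux terms to have the right signs. The only cosmetic difference is that the paper argues at an \emph{arbitrary} cell with $\rho^{k+1}_{n,m}>1$ rather than at the maximal cell---your extremality assumption is never actually used, since the sign analysis of the fluxes relies solely on $\rho^{k+1}_{i_0,j_0}>1$ and not on any comparison with neighbouring values.
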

\begin{proof}
    \textbf{Conservation of mass:} Summing the scheme \eqref{Eq:CL_Scheme} over $i,j = 0,\dots, N-1$ and abusing the telescopic sum, we have from periodicity
    \begin{align*}
       \sum_{i,j = 0}^{N-1}\frac{\rho^{k+1}_{i,j} - \rho^{k}_{i,j}}{\Delta t} &= - \sum_{i,j = 0}^{N-1}\frac{1}{h}\left[F_{i+1/2,j}^{k+1} - F_{i-1/2,j}^{k+1} + F_{i,j+1/2}^{k+1} - F_{i,j-1/2}^{k+1}\right]  \\
       &= -\frac{1}{h}\sum_{j=0}^{N-1}\left[F_{N-1/2,j}^{k+1} - F_{-1/2,j}^{k+1} \right] - \frac{1}{h}\sum_{i=0}^{N-1}\left[ F_{i,N-1/2}^{k+1} - F_{i,-1/2}^{k+1}\right] \\
       &= 0.
    \end{align*}

    \textbf{Boundedness:} Say for some $n,m \in \{0,1,\dots,N-1\}$ there is a point $\rho_{n,m}^{k+1} > 1$. Then
    \begin{align*}
        0 < h \frac{\rho_{n,m}^{k+1}-\rho_{n,m}^{k}}{\Delta t} &= -F_{n+1/2,m}^{k+1} + F_{n-1/2,m}^{k+1} - F_{n,m+1/2}^{k+1} + F_{n,m-1/2}^{k+1}  \\
         &= -M(\rho^{k+1}_{n,m},\rho^{k+1}_{n+1,m})[u^{k+1}_{n+1/2,m}]^+ - M(\rho^{k+1}_{n+1,m} , \rho^{k+1}_{n,m} ) [u_{n+1/2,m}^{k+1}]^- \\
         &\quad + M(\rho^{k+1}_{n-1,m},\rho^{k+1}_{n,m})[u^{k+1}_{n-1/2,m}]^+ + M(\rho^{k+1}_{n,m} , \rho^{k+1}_{n-1,m} ) [u_{n-1/2,m}^{k+1}]^- \\
         &\quad -M(\rho^{k+1}_{n,m} , \rho^{k+1}_{n,m+1}) [u_{n,m+1/2}^{k+1}]^+ - M(\rho^{k+1}_{n,m+1} , \rho^{k+1}_{n,m} ) [u_{n,m+1/2}^{k+1}]^- \\
         & \quad +M(\rho^{k+1}_{n,m-1} , \rho^{k+1}_{n,m}) [u_{n,m-1/2}^{k+1}]^+ + M(\rho^{k+1}_{n,m} , \rho^{k+1}_{n,m-1} ) [u_{n,m-1/2}^{k+1}]^- .
    \end{align*}
    Recalling the mobility $M(x,y) = \beta [1+x]^+ [1-y]^+$, we see that $M$ returns only nonnegative values. Furthermore, since $M(x,y) = 0$ when $y > 1$, the above inequality reduces to 
    \begin{align*}
        0 &< -M(\rho^{k+1}_{n,m},\rho^{k+1}_{n+1,m})[u^{k+1}_{n+1/2,m}]^+  -M(\rho^{k+1}_{n,m} , \rho^{k+1}_{n,m+1}) [u_{n,m+1/2}^{k+1}]^+ \\
        &\quad+ M(\rho^{k+1}_{n,m} , \rho^{k+1}_{n-1,m} ) [u_{n-1/2,m}^{k+1}]^-+ M(\rho^{k+1}_{n,m} , \rho^{k+1}_{n,m-1} ) [u_{n,m-1/2}^{k+1}]^- .
    \end{align*}
    However, we see that this is a contradiction. Indeed, each term in the above inequality is either zero or negative depending on the neighboring values $\rho_{n-1,m}^{k+1}$, $\rho_{n+1,m}^{k+1}$, $\rho_{n,m-1}^{k+1}$, and $\rho_{n,m+1}^{k+1}$. The proof for $\rho_{i,j}^{k+1} \geq -1$ follows a similar argument.

\end{proof}

Generally in the simulation of gradient flow equations, it is desirable for the numerical scheme to dissipate a discrete version of \eqref{Eq:Free_Energy}, namely, 
\begin{equation}\label{Eq:Discrete_FreeEnergy}
    \mathcal{E}_h (\rho^{k+1}) := h^2 \sum_{i,j = 0}^{N-1} [f_c(\rho^{k+1}) - f_e(\rho^{k+1}) + (\rho^{k+1})^2 ] - \langle J\star[\rho^{k+1}] , \rho^{k+1} \rangle_h.
\end{equation}
However, due to the complications introduced by the interaction between the choice of time stepping and the convolution, strict free energy dissipation of scheme \eqref{Eq:CL_Scheme} may be out of reach. 
However, by introducing the following pseudo energy:
\begin{equation}\label{Eq:PseudoEnergy}
    \hat{\mathcal{E}}_h (\rho^{k+1},\rho^{k}) := \mathcal{E}_h(\rho^{k+1}) + \|\rho^{k+1}-\rho^{k}\|_{\ell^2}^2 +  \langle J\star[\rho^{k+1}-\rho^{k}] ,\rho^{k+1}-\rho^{k}\rangle_h,
\end{equation}
we can arrive at a stability result.

\begin{theorem}
    For any $\Delta t, h > 0$, 
    \[ \hat{\mathcal{E}}_h (\rho^{k+1},\rho^{k}) - \hat{\mathcal{E}}_h (\rho^{k},\rho^{k-1}) \leq \|\rho^{k+1} - \rho^k \|_{\ell^2}^2.  \]
\end{theorem}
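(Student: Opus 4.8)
### Proof Proposal

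The plan is to test the scheme \eqref{Eq:CL_Scheme} against the discrete chemical potential $w^{k+1}$ in the discrete inner product, and then carefully account for the mismatch between the convex-split, time-lagged definition of $w^{k+1}$ and the ``true'' variational derivative of $\mathcal{E}_h(\rho^{k+1})$. First I would multiply \eqref{Eq:CL_Scheme} by $h^2 w^{k+1}_{i,j}$ and sum over $i,j$. Summation by parts (using periodicity) on the flux term produces a manifestly nonpositive dissipation term of the form $-\Delta t \sum M(\cdot,\cdot)\big([u]^+)^2 + (\cdot)([u]^-)^2\big) \cdot h \le 0$, since the split mobility $M$ from \eqref{Eq:Mobility_Split} is nonnegative and the velocities $u^{k+1}_{l+1/2}$ are exactly the discrete gradient of $w^{k+1}$. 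This leaves the identity
\[
\langle \rho^{k+1} - \rho^k, w^{k+1} \rangle_h \le 0.
\]

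The core of the argument is then to show that $\langle \rho^{k+1}-\rho^k, w^{k+1}\rangle_h$ controls $\hat{\mathcal{E}}_h(\rho^{k+1},\rho^k) - \hat{\mathcal{E}}_h(\rho^k,\rho^{k-1})$ up to the stated error term $\|\rho^{k+1}-\rho^k\|_{\ell^2}^2$. I would handle $w^{k+1} = f_c'(\rho^{k+1}) - f_e'(\rho^k) + 2\rho^{k+1} - 2[J\star\rho]^k$ term by term. For the convex pieces, the standard convex-splitting inequalities $f_c(\rho^{k+1}) - f_c(\rho^k) \le \langle f_c'(\rho^{k+1}), \rho^{k+1}-\rho^k\rangle_h$ and $f_e(\rho^k) - f_e(\rho^{k+1}) \le \langle f_e'(\rho^k), \rho^k - \rho^{k+1}\rangle_h$ (convexity of $f_c,f_e$) give the local-energy decrease $h^2\sum[f_c - f_e](\rho^{k+1}) - h^2\sum[f_c-f_e](\rho^k) \le \langle f_c'(\rho^{k+1}) - f_e'(\rho^k), \rho^{k+1}-\rho^k\rangle_h$. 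For the quadratic term $2\rho^{k+1}$, the polarization identity $2\langle \rho^{k+1}, \rho^{k+1}-\rho^k\rangle_h = \|\rho^{k+1}\|_{\ell^2}^2 - \|\rho^k\|_{\ell^2}^2 + \|\rho^{k+1}-\rho^k\|_{\ell^2}^2$ matches the $(\rho^{k+1})^2$ term of $\mathcal{E}_h$ together with the $\|\rho^{k+1}-\rho^k\|_{\ell^2}^2$ term sitting inside the pseudo-energy $\hat{\mathcal{E}}_h$.

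The delicate term is the nonlocal one, $-2\langle [J\star\rho^k], \rho^{k+1}-\rho^k\rangle_h$, because the convolution is evaluated at the old time level $k$ rather than $k+1$. Writing $\rho^k = \rho^{k+1} - (\rho^{k+1}-\rho^k)$ inside the convolution and using the symmetry \eqref{Eq:CircConv_Commute}, I would split this into $-2\langle [J\star\rho^{k+1}],\rho^{k+1}\rangle_h + 2\langle[J\star\rho^{k+1}],\rho^k\rangle_h + 2\langle[J\star(\rho^{k+1}-\rho^k)],\rho^{k+1}-\rho^k\rangle_h$; a further polarization on the $J$-bilinear form, analogous to the one above but with $\langle J\star\cdot,\cdot\rangle_h$ in place of $\|\cdot\|_{\ell^2}^2$, reproduces the $-\langle J\star\rho^{k+1},\rho^{k+1}\rangle_h$ term of $\mathcal{E}_h$ and, crucially, the $+\langle J\star[\rho^{k+1}-\rho^k],\rho^{k+1}-\rho^k\rangle_h$ correction carried by $\hat{\mathcal{E}}_h$. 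Assembling all pieces, $\langle \rho^{k+1}-\rho^k, w^{k+1}\rangle_h \ge \hat{\mathcal{E}}_h(\rho^{k+1},\rho^k) - \mathcal{E}_h(\rho^k) - \langle J\star[\rho^{k+1}-\rho^k],\rho^{k+1}-\rho^k\rangle_h - \|\rho^{k+1}-\rho^k\|_{\ell^2}^2 + (\text{nonnegative } J\text{- and } \ell^2\text{-difference terms})$; recognizing that $\mathcal{E}_h(\rho^k) + \|\rho^k-\rho^{k-1}\|_{\ell^2}^2 + \langle J\star[\rho^k-\rho^{k-1}],\rho^k-\rho^{k-1}\rangle_h = \hat{\mathcal{E}}_h(\rho^k,\rho^{k-1})$ and that the leftover difference terms absorb favorably, combined with $\langle \rho^{k+1}-\rho^k,w^{k+1}\rangle_h\le 0$, yields the claim. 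I expect the main obstacle to be bookkeeping: tracking exactly which polarization remainders land inside $\hat{\mathcal{E}}_h(\rho^{k+1},\rho^k)$ versus $\hat{\mathcal{E}}_h(\rho^k,\rho^{k-1})$ versus the right-hand side $\|\rho^{k+1}-\rho^k\|_{\ell^2}^2$, and in particular verifying that the sign of the residual nonlocal term $\langle J\star[\rho^k - \rho^{k-1}], \rho^k-\rho^{k-1}\rangle_h$ (nonnegative, since $J\ge 0$ is radial — e.g. via \eqref{Eq:CircConvBound} with $\phi=\psi$, or directly) points the right way so that it can be dropped rather than needing to be controlled.
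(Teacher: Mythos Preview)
Your proposal is correct and follows essentially the same route as the paper: first establish $\langle \rho^{k+1}-\rho^k, w^{k+1}\rangle_h \le 0$ by summation by parts on the flux and nonnegativity of the split mobility, then use the convex-splitting inequalities for $f_c,f_e$ (and the quadratic), the polarization identity for the $J$-bilinear form to produce the $\langle J\star[\rho^{k+1}-\rho^k],\rho^{k+1}-\rho^k\rangle_h$ correction in $\hat{\mathcal{E}}_h$, and finally discard the leftover $-\|\rho^k-\rho^{k-1}\|_{\ell^2}^2 - \langle J\star[\rho^k-\rho^{k-1}],\rho^k-\rho^{k-1}\rangle_h$. One small caveat on your last line: nonnegativity and radial symmetry of $J$ alone do \emph{not} force $\langle J\star\phi,\phi\rangle_h\ge 0$; the paper instead drops the \emph{combined} term $-\|\rho^k-\rho^{k-1}\|_{\ell^2}^2 - \langle J\star[\rho^k-\rho^{k-1}],\rho^k-\rho^{k-1}\rangle_h$ using \eqref{Eq:CircConvBound}, so you should pair the $J$-remainder with the $\ell^2$-remainder rather than try to sign it on its own.
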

\begin{proof}
    We begin first by showing 
    
    \begin{equation}\label{Eq:NegativeInnerPotential}
        \langle \rho^{k+1}- \rho^k , w^{k+1} \rangle_h \leq 0.
    \end{equation}     
    Indeed, by using the structure of scheme \eqref{Eq:CL_Scheme} and the periodic boundary conditions, we have the following computation:
    
    \begin{align*}
    \langle \rho^{k+1}- \rho^k , w^{k+1} \rangle_h = 
        &- h \Delta t \sum_{i,j = 0}^{N-1}  (F_{i+ 1/2 , j}^{k+1} -F_{i- 1/2 , j}^{k+1} +F_{i,j+ 1/2 }^{k+1} -F_{i,j- 1/2 }^{k+1} ) w_{i,j}^{k+1} \\ 
        &\quad = - h\Delta t \sum_{i,j = 0}^{N-1} (F_{i+ 1/2 , j}^{k+1} -F_{i- 1/2 , j}^{k+1}) w_{i,j}^{k+1} 
        - h\Delta t \sum_{i,j = 0}^{N-1} (F_{i,j+ 1/2 }^{k+1} -F_{i,j- 1/2 }^{k+1}) w_{i,j}^{k+1} \\
        &\quad = - h\Delta t \sum_{i,j = 0}^{N-1} (  w_{i,j}^{k+1} - w_{i+1,j}^{k+1}) F_{i+ 1/2 , j}^{k+1} - h\Delta t \sum_{i,j = 0}^{N-1} (  w_{i,j}^{k+1} - w_{i,j+1}^{k+1}) F_{i , j+ 1/2}^{k+1} \\
        &\quad = -h^2 \Delta t \sum_{i,j = 0}^{N-1} u_{i+1/2,j}^{k+1} F_{i+ 1/2 , j}^{k+1} -h^2 \Delta t \sum_{i,j = 0}^{N-1} u_{i,j+ 1/2}^{k+1} F_{i , j+ 1/2}^{k+1}\\
        &\quad = -h^2 \Delta t \sum_{i,j = 0}^{N-1} u_{i+1/2,j}^{k+1} \left[M(\rho^{k+1}_{i,j} , \rho^{k+1}_{i+1,j}) [u_{i+1/2,j}^{k+1}]^+ + M(\rho^{k+1}_{i+1,j} , \rho^{k+1}_{i,j} ) [u_{i+1/2,j}^{k+1}]^- \right] \\
        &\qquad - h^2 \Delta t \sum_{i,j = 0}^{N-1} u_{i,j+ 1/2}^{k+1} \left[ M(\rho^{k+1}_{i,j} , \rho^{k+1}_{i,j+1}) [u_{i,j+1/2}^{k+1}]^+ + M(\rho^{k+1}_{i,j+1} , \rho^{k+1}_{i,j} ) [u_{i,j+1/2}^{k+1}]^- \right] \\
        &\quad \leq -h^2 \Delta t \sum_{i,j = 0}^{N-1}  \min\{M(\rho^{k+1}_{i,j} , \rho^{k+1}_{i+1,j}), M(\rho^{k+1}_{i+1,j} , \rho^{k+1}_{i,j} )\} |u_{i+1/2,j}^{k+1}|^2\\
        &\qquad -h^2 \Delta t \sum_{i,j = 0}^{N-1}  \min\{M(\rho^{k+1}_{i,j} , \rho^{k+1}_{i,j+1}), M(\rho^{k+1}_{i,j+1} , \rho^{k+1}_{i,j} )\} |u_{i,j+ 1/2}^{k+1}|^2 \leq 0 .
    \end{align*}

    Now, we have by unpacking definitions,
    
    \begin{align}
        \hat{\mathcal{E}}_h (\rho^{k+1},\rho^{k}) - \hat{\mathcal{E}}_h (\rho^{k},\rho^{k-1}) &= \underbrace{h^2 \sum_{i,j = 0}^{N-1} [f_c(\rho_{i,j}^{k+1})-f_c(\rho_{i,j}^k) - f_e(\rho_{i,j}^{k+1}) + f_e(\rho_{i,j}^{k}) + (\rho^{k+1}_{i,j})^2 - (\rho^k_{i,j})^2 ]}_{A} \notag\\
        &\quad \underbrace{- \langle J\star[\rho^{k+1}] ,\rho^{k+1}\rangle_h + \langle J\star[\rho^{k}] ,\rho^{k}\rangle_h}_{B}\notag\\
        &\quad+ \|\rho^{k+1}-\rho^{k} \|_{\ell^2}^2 -\|\rho^{k}-\rho^{k-1} \|_{\ell^2}^2\notag\\
        &\quad +\langle J\star[\rho^{k+1}-\rho^{k}] ,\rho^{k+1}-\rho^{k}\rangle_h - \langle J\star[\rho^{k}-\rho^{k-1}] ,\rho^{k}-\rho^{k-1}\rangle_h. \label{Eq:PsuEnergyDiff}
    \end{align}
    Looking first at the terms making up $B$, we have from algebraic manipulation
    
    \begin{align*}
        - \langle J\star[\rho^{k+1}] ,\rho^{k+1}\rangle_h + \langle J\star[\rho^{k}] ,\rho^{k}\rangle_h = - \langle J\star[\rho^{k+1}-\rho^{k}] ,\rho^{k+1}-\rho^{k}\rangle_h - 2\langle J\star[\rho^k], \rho^{k+1}-\rho^{k}\rangle_h.
    \end{align*}
    Turning now to the terms that make up $A$, we have via convexity of $f_c$, $-f_e$, and $z^2$,
    
    \begin{align*}
        A \leq \langle \rho^{k+1} - \rho^k , f'_c(\rho^{k+1}) - f'_e(\rho^k) + 2\rho^{k+1} \rangle_h.
    \end{align*}
    Applying the above results to \eqref{Eq:PsuEnergyDiff}, we have
    
    \begin{align*}
        \hat{\mathcal{E}}_h (\rho^{k+1},\rho^{k}) - \hat{\mathcal{E}}_h (\rho^{k},\rho^{k-1})  &\leq \langle \rho^{k+1}- \rho^k , w^{k+1} \rangle_h + \|\rho^{k+1}-\rho^{k} \|_{\ell^2}^2\\&
        \quad -\|\rho^{k}-\rho^{k-1} \|_{\ell^2}^2 - \langle J\star[\rho^{k}-\rho^{k-1}] ,\rho^{k}-\rho^{k-1}\rangle_h. 
    \end{align*}    
    Finally, owing to \eqref{Eq:NegativeInnerPotential} and \eqref{Eq:CircConvBound} we obtain the desired result.
\end{proof}

\begin{remark}
    Using similar arguments to the proof above, we can prove strict free energy dissipation for a semi-discrete (continuous in time) version of \eqref{Eq:CL_Scheme}, namely, we can prove:
\begin{theorem}
    Let the discrete free energy be given by 
    \[ \mathcal{E}_h (t) :=  h^2 \sum_{i,j = 0}^{N-1} [f(\rho(t)_{i,j})  + (\rho(t)_{i,j})^2 ] - \langle J\star[\rho(t)] , \rho(t)\rangle_h.\]
    Then, for any $h>0$,
    \[\frac{\diff}{\diff t}\mathcal{E}_h(t) \leq 0.\]
\end{theorem}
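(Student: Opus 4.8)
The plan is to mimic the structure of the proof of the previous theorem, but now with a genuine time derivative in place of the discrete difference, so that all the error terms that forced the introduction of the pseudo-energy $\hat{\mathcal E}_h$ disappear. First I would write down the semi-discrete scheme: replace the forward difference $(\rho^{k+1}_{i,j}-\rho^k_{i,j})/\Delta t$ by $\frac{\diff}{\diff t}\rho_{i,j}(t)$, keep the spatial fluxes $F_{i\pm 1/2,j}$, $F_{i,j\pm 1/2}$ as before, and — crucially — use the \emph{fully implicit} chemical potential $w_{i,j} = f_c'(\rho_{i,j}) - f_e'(\rho_{i,j}) + 2\rho_{i,j} - 2[J\star\rho]_{i,j} = f'(\rho_{i,j}) + 2\rho_{i,j} - 2[J\star\rho]_{i,j}$ evaluated at the same time $t$, since there is no longer any need to split the time levels. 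Then I would differentiate $\mathcal E_h(t)$ directly.

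The key steps, in order: (1) By the chain rule, $\frac{\diff}{\diff t}\mathcal E_h(t) = h^2\sum_{i,j} \big(f'(\rho_{i,j}) + 2\rho_{i,j}\big)\dot\rho_{i,j} - \frac{\diff}{\diff t}\langle J\star\rho,\rho\rangle_h$. Using the symmetry property \eqref{Eq:CircConv_Commute}, $\frac{\diff}{\diff t}\langle J\star\rho,\rho\rangle_h = 2\langle J\star\rho,\dot\rho\rangle_h$, so the right-hand side collapses to $\langle w,\dot\rho\rangle_h$ with $w$ as above. (2) Now substitute $\dot\rho_{i,j}$ from the semi-discrete scheme, i.e. $h\dot\rho_{i,j} = -(F_{i+1/2,j}-F_{i-1/2,j}+F_{i,j+1/2}-F_{i,j-1/2})$, and perform exactly the same discrete summation-by-parts (abusing the telescoping sum under periodicity) that appears in the proof of the previous theorem: this rewrites $\langle w,\dot\rho\rangle_h$ as $-h^2\sum_{i,j} u_{i+1/2,j} F_{i+1/2,j} - h^2\sum_{i,j} u_{i,j+1/2} F_{i,j+1/2}$ where $u_{l+1/2} = -\frac1h(w_{l+1}-w_l)$. (3) Expand each flux via its upwind definition $F_{i+1/2,j} = M(\rho_{i,j},\rho_{i+1,j})[u_{i+1/2,j}]^+ + M(\rho_{i+1,j},\rho_{i,j})[u_{i+1/2,j}]^-$ and observe, just as before, that $u\cdot\big(M(x,y)[u]^+ + M(y,x)[u]^-\big) \ge \min\{M(x,y),M(y,x)\}\,|u|^2 \ge 0$ since $M$ is nonnegative. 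Hence $\frac{\diff}{\diff t}\mathcal E_h(t) \le 0$.

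I do not expect a serious obstacle here; the argument is essentially the first half of the proof of the preceding theorem (the derivation of \eqref{Eq:NegativeInnerPotential}) run with $\dot\rho$ instead of $\rho^{k+1}-\rho^k$. The one point requiring a little care — and the closest thing to a "hard part" — is step (1): one must justify $\frac{\diff}{\diff t}\langle J\star\rho,\rho\rangle_h = 2\langle J\star\rho,\dot\rho\rangle_h$, which is where the self-adjointness \eqref{Eq:CircConv_Commute} of the discrete convolution is used, and similarly the passage from $f_c,f_e$ to $f=f_c-f_e$ in the time-continuous setting no longer needs convexity, only differentiability of $f$, so the convex-splitting hypothesis is not invoked at all. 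It is also worth noting explicitly that this is why strict dissipation is available here whereas only the pseudo-energy estimate holds for the fully discrete scheme: the lagged terms $[J\star\rho]^k$ and $f_e'(\rho^k)$ in \eqref{Eq:CL_Scheme}, which generated the $\|\rho^{k+1}-\rho^k\|^2$ and $\langle J\star[\rho^{k+1}-\rho^k],\rho^{k+1}-\rho^k\rangle_h$ remainders, collapse onto the current time level in the continuous limit.
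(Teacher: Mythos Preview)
Your proposal is correct and follows essentially the same approach as the paper: differentiate $\mathcal{E}_h(t)$ via the chain rule, invoke the self-adjointness \eqref{Eq:CircConv_Commute} to collapse the derivative to $\langle \dot\rho, w\rangle_h$, and then conclude by the same summation-by-parts and upwind-flux sign argument that produced \eqref{Eq:NegativeInnerPotential}. Your write-up is in fact more detailed than the paper's, which simply states that the final inequality ``follows similar arguments to those which lead to \eqref{Eq:NegativeInnerPotential}.''
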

\begin{proof}
    Differentiating $\mathcal{E}_h$ with respect to time and owing to \eqref{Eq:CircConv_Commute}, we have
    \begin{align*}
        \frac{\diff}{\diff t}\mathcal{E}_h(t) &= h^2\sum_{i,j = 0}^{N-1}  f'(\rho(t)_{i,j}) \rho'(t)_{i,j} + 2\rho(t)_{i,j} \rho'(t)_{i,j} - 2 \langle J\star[\rho(t)] , \rho'(t)\rangle_h \\
        &= \langle \rho'(t), f'(\rho(t)) + 2\rho(t) - 2  J\star[\rho(t)] \rangle_h
        = \langle \rho'(t) , w(t) \rangle_h \leq 0.
    \end{align*}
    Where the last inequality follows similar arguments to those which lead to \eqref{Eq:NegativeInnerPotential}.
\end{proof}
    Therefore, it may be possible to examine other choice of time discretizations in a similar manner as \cite{bailo2020fully} and arrive at strict energy dissipation. 
    However, as the authors point out in the aforementioned work, these choices of discretization are computationally less efficient as they treat the convolution term implicitly.  
\end{remark}

\section{Simulation}\label{Sec:Simulation}

\begin{figure}[h]
    \includegraphics[width = 6in]{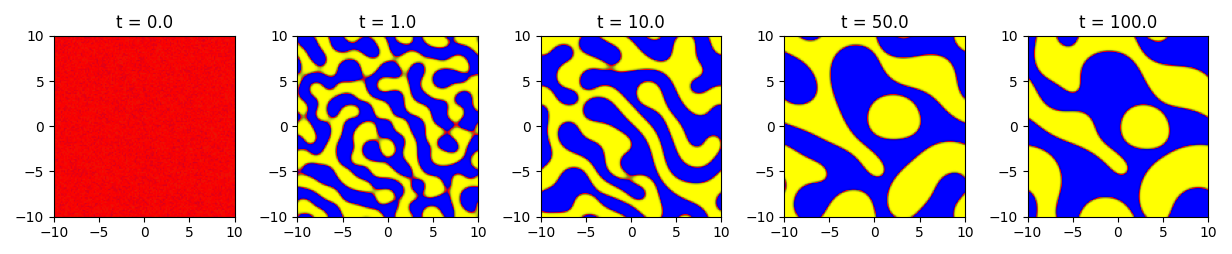}
    \includegraphics[width = 6in]{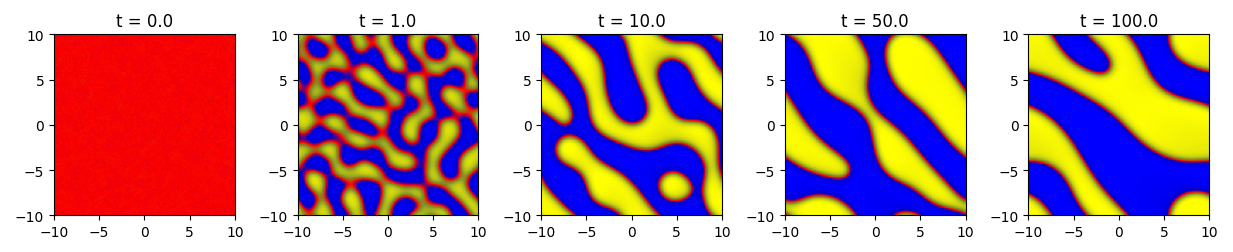}
    \caption{Simulation of phase separation with the Flory--Huggins potential (top) and Ginzburg--Landau potential (bottom) using scheme \eqref{Eq:CL_Scheme} for $N = 2^7$, $\beta = 5$, and with a time step $\Delta t = 10^{-2}$. Here, phases near 1 are colored blue, near -1 yellow, and near 0 red. }
    \label{fig:PhaseSeparation}
\end{figure}

In this section, we numerically demonstrate the properties proven in the previous section for both the Flory--Huggins (FH) potential \eqref{Eq:RefSystem_FH} and the Ginzburg--Landau (GL) potential \eqref{Eq:RefSystem_GL}. 
For these simulations, we take the initial condition to be a small random perturbation of the zero solution. 
This type of initial condition represents a `well-mixed' initial condition. 
In Figure \ref{fig:PhaseSeparation}, we plot the approximated solution to equation \eqref{Eq:NLCH_GradFlow} with periodic boundary conditions over time. 
In Figure \ref{fig:Max_Val+Energy}, we plot the evolution of the discrete free energy \eqref{Eq:Discrete_FreeEnergy} and the maximum value of $|\rho|$ over time.
Comparing these results to Figure \ref{fig:FV_Max}, we see that the simulation strictly adheres to the analytical bounds of the equation. 
Finally in Figure \ref{fig:Energy_Dissipation}, we display in a log-log plot the evolution of the discrete free energy over time compared to functions of the form $y=Ct^{p}$. 
We point out that the dissipation rate appears to be bounded between the powers $p = \frac{-1}{3}$ and $p = \frac{-1}{4}$ which is inline with previous results \cite{du2018stabilized, kohn2002upper}.

\begin{figure}[!htb]
    \centering
    \includegraphics[scale = 0.45]{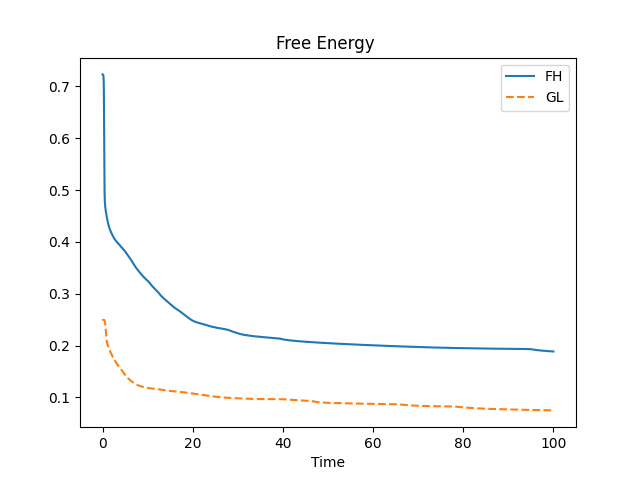}   
    \includegraphics[scale = 0.45]{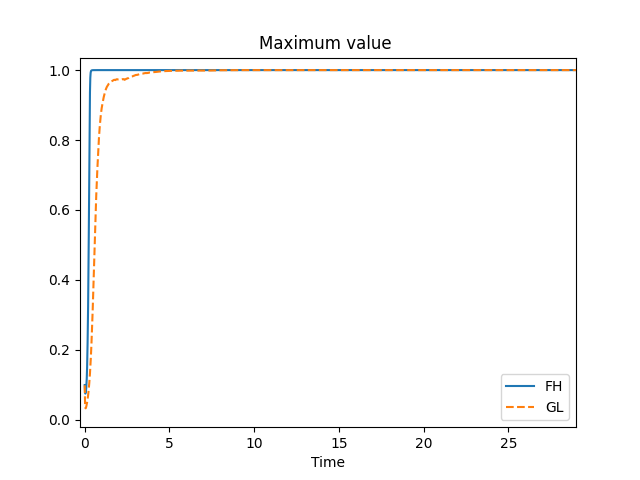}
    \caption{\textbf{Left:} Evolution of the free energy \eqref{Eq:Free_Energy} over time for both the Flory--Huggins (FH) and Ginzburg--Landau (GL) potentials.     
    \textbf{Right:} Maximum value of $|\rho|$ over time.}
    \label{fig:Max_Val+Energy}
\end{figure}

\begin{figure}[!htb]
    \centering
    \includegraphics[scale = 0.75]{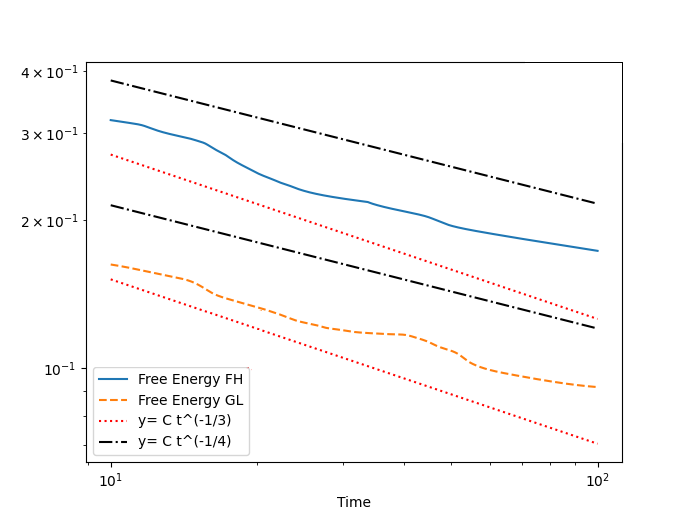}   
    \caption{Energy dissipation rate for the Flory--Huggins (FH) and Ginzburg--Landau (GL) potentials.}
    \label{fig:Energy_Dissipation}
\end{figure}

\section{Conclusion}\label{Sec:Conclusion}
In this work, we have presented a finite volumes based numerical method heavily inspired by the schemes used for gradient flow equations \cite{bailo2020fully,bailo2023,huang2022sturcture} with adjustments to handle the nonlocal terms inspired by \cite{guan_diss,guan2014convergent,guan2014second}.
We have shown this numerical scheme unconditionally conserves mass and preserves the analytical bounds of the solution.
Additionally, since this scheme is based on the gradient flow formulation of the equation, we can show the scheme is energy stable in a similar way to \cite{du2018stabilized}. 

Adapting the techniques outlined here to equation \eqref{Eq:Marra} comes with some interesting challenges.
The main difficulty is extending the choice of mobility splitting \eqref{Eq:Mobility_Split} to the mobility \eqref{Eq:Marra_Mobility}.
This splitting is crucial in the proof of the bound preservation property, $|\rho| \leq 1$. 
From \cite{lyons2024phase}, solutions to \eqref{Eq:Marra} uphold the bound $0\leq |m| \leq \phi \leq 1$, and so, the chosen splitting technique must be devised so that this bound is preserved.
Additionally, as the reference system  for \eqref{Eq:Marra}, $g$, is now dependent on two values, it is not immediately clear how the time step should be split among its convex components in order to preserve the energy stability of the system. We aim to tackle these problems in a future work.

\section*{Acknowledgements} 

RL and AM gratefully acknowledge the financial support of Carl Tryggers Stiftelse via the grant CTS 21:1656. 
AM and GN express their deep appreciation for the financial support by the Knowledge Foundation (project nr. KK 2020-0152).

\bibliographystyle{plain}
\bibliography{morpho}
\end{document}